\newtheorem{theorem}{Theorem}[section]
\newtheorem{lemma}[theorem]{Lemma}
\newcounter{cl}[theorem]
\newtheorem{claim}[cl]{Claim}
\newtheorem{subclaim}{Subclaim}[cl]
\newtheorem*{subclaim*}{Subclaim}
\newtheorem{corollary}[theorem]{Corollary}
\theoremstyle{definition}
\newtheorem{definition}[theorem]{Definition}
\newtheorem*{definition*}{Definition}
\theoremstyle{remark}
\newtheorem*{remark*}{Remark}
\DeclareSymbolFont{AMSb}{U}{msb}{m}{n}
\DeclareMathSymbol{\N}{\mathbin}{AMSb}{"4E}
\DeclareMathSymbol{\Z}{\mathbin}{AMSb}{"5A}
\DeclareMathSymbol{\R}{\mathbin}{AMSb}{"52}
\DeclareMathSymbol{\Q}{\mathbin}{AMSb}{"51}
\DeclareMathSymbol{\I}{\mathbin}{AMSb}{"49}
\DeclareMathSymbol{\C}{\mathbin}{AMSb}{"43}
\newcommand\ad{{\mathsf{AD}}}
\newcommand\ac{{\mathsf{AC}}}
\newcommand\zf{{\mathsf{ZF}}}
\newcommand\dc{{\mathsf{DC}}}
\newcommand\HOD{{\mathsf{HOD}}}
\newcommand\Ord{{\mathsf{Ord}}}
\newcommand{\BS}{{}^\omega\omega}
\newcommand{\cQ}{\mathcal{Q}}
\newcommand{\cM}{\mathcal{M}}
\newcommand{\cN}{\mathcal{N}}
\newcommand{\bR}{\mathbb{R}}
\newcommand{\langpm}{\mathcal{L}_{\mathrm{pm}}(\{\dot x_i \st i \in \omega\})} 
\newcommand{\lpm}{\mathcal{L}_{\mathrm{pm}}} 
\newcommand{\ctbleset}{\mathcal{P}_{\omega_1}(\mathbb{R})}
\newcommand{\st}{\colon}
\newcommand{\comm}[1]{}
\begin{document}
\title{The Axiom of Determinacy Implies Dependent Choices in Mice}

\subjclass[2010]{03E45, 03E60, 03E25, 03E15} 

\keywords{Infinite Game, Determinacy, Dependent Choices, Inner Model
  Theory, Mouse}

\author{Sandra M\"uller} \address{Sandra M\"uller, Kurt G\"odel
  Research Center, Institut f\"ur Mathematik, UZA 1, Universit\"at
  Wien. Augasse 2-6, 1090 Wien, Austria.}
\email{mueller.sandra@univie.ac.at}

\date{\today}

\begin{abstract}
  We show that the Axiom of Dependent Choices, $\dc$, holds in
  countably iterable, passive premice $\cM$ constructed over their reals
  which satisfy the Axiom of Determinacy, $\ad$, in a $\zf+\dc_{\bR^\cM}$
  background universe. This generalizes an argument of Kechris for
  $L(\bR)$ using Steel's analysis of scales in mice. In particular, we
  show that for any $n \leq \omega$ and any countable set of reals $A$
  so that $M_n(A) \cap \bR = A$ and $M_n(A) \vDash \ad$, we have that
  $M_n(A) \vDash \dc$. 
\end{abstract}
\maketitle
\setcounter{tocdepth}{1}

\section{Introduction}

We prove that in passive, countably iterable mice $\cM$ constructed
over their reals, $\ad$, the \emph{Axiom of Determinacy}, implies
$\dc$, the \emph{Axiom of Dependent Choices}, working in a background
universe which satisfies $\zf+\dc_{\bR^\cM}$. Here we write
$\bR^\cM = \bR \cap \cM$ for the set of reals in $\cM$.

Recall that $\dc$ is the following statement: For every nonempty set
$X$ and every binary relation $P$ on $X$,
\[ \forall a \in X \exists b \in X \; P(a,b) \, \Rightarrow \, \exists
  f \colon \omega \rightarrow X \, \forall n \; P(f(n), f(n+1)). \]
Moreover, $\dc_\bR$ denotes $\dc$ restricted to the case where
$X = \bR$ and more generally, for some nonempty set $Y$, $\dc_Y$
denotes $\dc$ restricted to the case where $X = Y$.

G\"odel's constructible universe over the reals $L(\bR)$ is the
closure of $\bR$ under the definable power set operation. Kechris
showed in \cite{Ke84} that in $L(\bR)$, the Axiom of Determinacy
implies the Axiom of Dependent Choices. His proof is based on the
analysis of scales in $L(\bR)$ which was developed by Martin,
Moschovakis, and Steel (see \cite{MMS82}, \cite{Mo08}, \cite{MaSt08},
and \cite{St08a}). A generalization of \cite{Ke84} and the analysis of
scales to the Dodd-Jensen core model over $\bR$ was shown by
Cunningham in \cite{Cu95}. We prove the following more general result
for arbitrary mice building on the analysis of scales in mice from
\cite{St08b}. Note that, in contrast to Kechris's result for $L(\bR)$,
our result requires $\dc_{\bR^\cM}$ to hold in $V$ in order to
consider countable elementary substructures of $\cM$. We will make it
clear in the proof where the countability of the model in question is
used.

\begin{theorem}[$\zf$]\label{thm:ADimpliesDC}
  Let $\cM$ be a passive, countably iterable $\bR^\cM$-premouse such
  that $\cM \vDash \ad$ and suppose that $\dc_{\bR^\cM}$ holds in
  $V$. Then $\cM \vDash \dc$.
\end{theorem}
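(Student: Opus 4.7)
The plan is to adapt Kechris's proof that $\ad$ implies $\dc$ in $L(\bR)$ \cite{Ke84} to the mouse setting, replacing his use of the Moschovakis-Steel analysis of scales in $L(\bR)$ with Steel's analysis of scales in mice from \cite{St08b}. Given $X, P \in \cM$ with $\cM \vDash \forall a \in X\, \exists b \in X\, P(a,b)$, the goal is to produce $f \in \cM$ with $f \colon \omega \to X$ and $\cM \vDash P(f(n), f(n+1))$ for every $n$.

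First I would reduce $(X,P)$ to a relation living on $\Ord^\cM \times \bR^\cM$, exploiting the fact that $\cM$ is an $\bR^\cM$-premouse: every element of $\cM$ is definable over some initial segment $\cM | \alpha$ from a real parameter and finitely many ordinals, via a fixed definable Skolem enumeration of $\cM|\alpha$. This yields coded versions $(X^*, P^*) \in \cM$ on $\Ord^\cM \times \bR^\cM$ such that a $\dc$-sequence for $(X^*, P^*)$ inside $\cM$ translates back to one for $(X, P)$.

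Next, using $\dc_{\bR^\cM}$ in $V$, I would form a countable elementary substructure $\bar\cM \prec \cM$ containing $X^*$ and $P^*$, and let $\pi \colon \bar\cM \to \tilde\cM$ be its Mostowski collapse, with elementary inverse $\sigma = \pi^{-1} \colon \tilde\cM \to \cM$. By the countable iterability of $\cM$, the transitive premouse $\tilde\cM$ is iterable, and by elementarity $\tilde\cM \vDash \ad$. Working inside $\tilde\cM$, Steel's analysis \cite{St08b} provides the scale property at the stage of the mouse hierarchy where the collapsed relation $\pi(P^*)$ is first definable, yielding a uniformization $g \in \tilde\cM$ of $\pi(P^*)$; iterating $g$ from any starting point in $\pi(X^*)$ produces a sequence $\tilde f \in \tilde\cM$ witnessing $\dc$ for $(\pi(X^*), \pi(P^*))$. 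Pushing forward by $\sigma$ and uncoding via the Skolem enumeration yields the required $f \in \cM$.

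The main obstacle will be the uniformization step. Steel's scale analysis identifies the stages of the mouse hierarchy at which the scale property holds, with the precise statement depending on whether the stage is active, passive, or determined by a fine-structural projectum; one must pinpoint the correct such level of $\tilde\cM$ associated to $\pi(P^*)$. A further complication is that scales in mice directly give uniformization for sets of reals, whereas $\pi(P^*)$ lives on $\Ord^{\tilde\cM} \times \bR^{\tilde\cM}$; passing from the former to the latter will require either a reflection argument bounding the essential ordinals or a section-by-section uniformization that remains internal to $\tilde\cM$. This mirrors the structural role of the $L_\alpha(\bR)$-hierarchy in Kechris's proof, now recast in the extender-sequence framework of mice and carried out inside a countable iterable hull rather than in the ambient model itself.
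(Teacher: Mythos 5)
Your outer architecture matches the paper's: reduce $\dc$ in $\cM$ to $\dc_{\bR^\cM}$ via the definable surjection $\Ord^\cM \times \bR^\cM \twoheadrightarrow \cM$, pass to a countable hull using $\dc_{\bR^\cM}$ in $V$ (which is exactly where the paper splits the result into a countable-mouse theorem plus a L\"owenheim--Skolem reduction), transfer iterability to the collapse, and obtain a uniformization of the bad relation which you then iterate. That part is fine, and the "push forward by $\sigma$" step is even unnecessary: $\dc$ is first-order, so elementarity alone returns it to $\cM$.

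The genuine gap is in the uniformization step, and it is not the one you flag. You propose to invoke Steel's scale analysis off the shelf: ``Steel's analysis provides the scale property at the stage where $\pi(P^*)$ is first definable, yielding a uniformization $g \in \tilde\cM$.'' But Steel's theorem that the pointclasses $\Sigma_1(\cM|\alpha)$ have the scale property is proved \emph{assuming $\dc$ holds in the model}: the norms of a scale must be genuine prewellorderings (every nonempty subset of the field has a least element), and the sup/inf comparison of ordinals in the closed-game-representation construction needs this well-foundedness. Since $\dc$ in $\tilde\cM$ is precisely the conclusion you are after, this appeal is circular. The repair --- which is the actual content of the paper --- is Kechris's: without $\dc$ one can still run the construction with \emph{quasi-norms} (linear preorders with no infinite strictly descending chains, built by the ``fake sup/fake inf'' method) to get a definable \emph{quasi-scale} on each $\Sigma_1(\cM|\alpha)$ set, and then a Third-Periodicity-style lemma, which uses only $\ac_{\omega,\bR}$ (a consequence of $\ad$), turns the quasi-scale into a basis theorem: every nonempty such set has a member first-order definable over $\cM|\alpha$ from the parameters. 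Uniformization then comes from choosing the least-definable witness, not from the scale property itself. A secondary omission: the comparison arguments inside Steel's analysis (showing the models produced by winning quasi-strategies are initial segments of the true mouse) themselves use choice in the background universe; since you are working over $\zf + \dc_{\bR^\cM}$ only, these must be relocated into a choice model such as $\HOD_{x,\Sigma,\Sigma'}$ for suitable iteration strategies, as the paper does. Without these two repairs the proof does not close.
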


For countable mice it is not necessary to assume
$\dc_{\bR^\cM}$, see Theorem \ref{thm:ADimpliesDCctblemice}. In
particular, Theorem \ref{thm:ADimpliesDC} holds for mice of the form
$M_n(A)$ for some $n \leq \omega$ and some countable set of reals $A$
such that $M_n(A) \cap \bR = A$. This result is for example used in
\cite{AgMu}, where the authors derive a model with $\omega+n$ Woodin
cardinals from a model of the form $M_n(A)$ with $M_n(A) \cap \bR = A$
which satisfies the Axiom of Determinacy.

Finally, we would like to thank the referee for the helpful comments
and suggestions.

\section{Countable mice in a $\zf$ background
  universe}\label{sec:dcinthebackground}

For simplicity, we first show the following version of Theorem
\ref{thm:ADimpliesDC} for countable mice and argue in the next section
that this implies Theorem \ref{thm:ADimpliesDC}. As mentioned above,
we do not require any form of choice in the background universe for
this result.

\begin{theorem}[$\zf$]\label{thm:ADimpliesDCctblemice}
  Let $\cM$ be a countable, passive, $(\omega_1+1)$-iterable
  $\bR^\cM$-premouse such that $\cM \vDash \ad$. Then
  $\cM \vDash \dc$.
\end{theorem}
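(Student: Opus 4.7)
The plan is to adapt Kechris's proof (\cite{Ke84}) that $\ad$ implies $\dc$ in $L(\bR)$ to the mouse setting, substituting Steel's analysis of scales in $\bR^\cM$-premice (\cite{St08b}) for the Martin--Moschovakis--Steel analysis in $L(\bR)$. Given $X, P \in \cM$ with $\cM \vDash \forall a \in X\,\exists b \in X\, P(a,b)$, the goal is to build an $\omega$-sequence $f \in \cM$ with $\cM \vDash P(f(n), f(n+1))$ for every $n$, and the strategy is to transfer the problem to one of uniformizing a relation on reals and then appeal to a scale.

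First, I would reflect inside $\cM$ to obtain an initial segment $\cM | \beta$ containing $X$ and $P$ in which the assumption $\forall a \in X\,\exists b \in X\, P(a,b)$ still holds, arranging $\beta$ to sit at the end of a weak $\Sigma_n$-gap in the sense of Steel's analysis so that a pointclass $\bs\Gamma$ over $\cM | \beta$ enjoys the scale property. The Moschovakis coding lemma, which holds inside $\cM$ because $\cM \vDash \ad$, then provides a surjection $\pi\colon \bR^\cM \to X$ in $\cM$; pulling $P$ back through $\pi$ yields a relation $\tilde P \subseteq \bR^\cM \times \bR^\cM$ in $\bs\Gamma$ with $\cM \vDash \forall x\,\exists y\,\tilde P(x,y)$. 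Applying the scale to uniformize $\tilde P$ produces, inside $\cM$, a function $g\colon \bR^\cM \to \bR^\cM$ with $\tilde P(x, g(x))$ for every $x \in \bR^\cM$.

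With $g$ and $\pi$ in hand, the DC-sequence is defined by iteration: fix any $x_0 \in \bR^\cM$ with $\pi(x_0) \in X$, set $x_{n+1} = g(x_n)$, and put $f(n) = \pi(x_n)$. Since $g, \pi \in \cM$ and $\cM$ is closed under finite recursion, the sequence $f$ lies in $\cM$ and $\cM \vDash P(f(n), f(n+1))$ holds by construction.

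The main obstacle is the scale-theoretic step: locating the correct gap structure so that $\tilde P$ falls into a pointclass for which Steel's analysis \cite{St08b} produces a scale, and checking that the resulting uniformization is actually definable over $\cM | \beta$ (so that $g \in \cM$, not merely $g \in V$). This is precisely where countability of $\cM$ is used: the hypothesis of $(\omega_1+1)$-iterability for a countable $\bR^\cM$-premouse gives full iterability for all trees of countable length, which powers the condensation arguments underlying \cite{St08b} for $\bR^\cM$-premice, and lets us freely pass to countable elementary substructures and their transitive collapses without leaving the iterable realm. The reflection and final iteration are routine; the technical heart is the invocation and careful parameter tracking of Steel's scales analysis.
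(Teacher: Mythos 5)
Your overall shape --- reduce $\dc$ in $\cM$ to uniformizing a relation on $\bR^\cM$ and then invoke Steel's analysis of scales --- matches the paper, but there is a genuine circularity at the technical heart. Steel's theorem that the relevant pointclasses over initial segments of $\cM$ have the scale property is proved \emph{assuming} $\dc$ (or at least $\dc_\bR$) inside the model: the norms are built from closed game representations, and without $\dc$ in $\cM$ one cannot show they are wellfounded prewellorderings, only linear preorders with no infinite $<$-descending chains. Since $\dc$ in $\cM$ is exactly what you are trying to prove, you cannot ``arrange $\beta$ so that $\bs\Gamma$ enjoys the scale property'' and then ``apply the scale to uniformize $\tilde P$.'' The paper's contribution is precisely the repair of this point: it reruns Steel's construction to produce only \emph{quasi-scales} (sequences of quasi-norms, where a quasi-norm is a linear preorder with no infinite descending chain rather than one in which every nonempty subset has a least element), and then replaces scale uniformization by Kechris's Third-Periodicity-style basis lemma (Lemma \ref{lem:Kechris}), which under $\ac_{\omega,\bR}$ --- a consequence of $\ad$ --- extracts a $\Gamma$-definable element from any nonempty set in $\Gamma$ carrying a $\Gamma$ quasi-scale; the uniformizing function is then obtained by selecting the least definable witness. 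Without this detour the argument is circular: with $\dc$ in $\cM$ already available, the scale analysis you invoke is legitimate but the theorem is vacuous.

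Two smaller problems. First, the coding lemma does not give a surjection $\pi\colon\bR^\cM\to X$ for an arbitrary $X\in\cM$ (take $X$ to be a set of ordinals that is not a surjective image of $\bR^\cM$); the correct reduction of $\dc$ to $\dc_{\bR^\cM}$ uses the definable surjection $\Ord^\cM\times\bR^\cM\twoheadrightarrow\cM$ available in any premouse over its reals, followed by minimizing over the ordinal coordinate. Second, the role of countability is more specific than ``freely passing to countable substructures'': it is used when showing that the positions from which Player I wins the closed game $G_x^\beta$ are exactly the honest ones --- there one plays a complete run whose reals enumerate all of $\bR^\cM$, so $\bR^\cM$ must be countable in $V$ --- and the ensuing comparison of the run's model with $\cM|\beta$ is carried out inside $\HOD_{x,\Sigma,\Sigma'}$, a model of choice, to avoid any appeal to $\dc$ in the background universe.
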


For the definition of premice and $(\omega_1+1)$-iterability we refer
the reader to \cite{St10}, and to \cite{MS94} and \cite{SchStZe02} for
more background. Moreover, we refer to \cite{St08b} for the notion of
$X$-premice for arbitrary sets $X$. First, we recall the notion of
iterability we use in the statement of Theorem \ref{thm:ADimpliesDC}.

\begin{definition}
  Let $A$ be a set of reals and suppose $\cM$ is an $A$-premouse.  We
  say that $\cM$ is \emph{countably iterable} iff whenever $\bar \cM$
  is a countable $\bar A$-premouse for a set of reals $\bar A$ and
  there is an elementary embedding
  $\pi \colon \bar \cM \rightarrow \cM$, then $\bar \cM$ is
  $(\omega_1+1)$-iterable.
\end{definition}

To prove Theorem \ref{thm:ADimpliesDCctblemice}, we will show that the
argument in the proof of \cite[Theorem 4.1]{St08b} which yields the
existence of scales in $\cM$ using $\cM \vDash \dc$, can be used to
show the existence of quasi-scales without using $\dc$ in
$\cM$. Moreover, we sketch how we can adapt the argument from
\cite{Ke84} for $V = L(\bR)$ to obtain $\cM \vDash \dc$ from these
quasi-scales.

Following the notation in \cite{St08b}, we write $K(\bR)$ for the
model-theoretic union of all $\omega$-sound, countably iterable
premice over $\bR$ which project to $\bR$. Using $\dc_\bR$, it is easy
to show that any two such premice $\cM$ and $\cN$ line up,
i.e. satisfy $\cM \unlhd \cN$ or $\cN \unlhd \cM$. Therefore $K(\bR)$
is well-defined.

\begin{remark*} If we consider premice $\cM(\bR)$ constructed over all
  reals $\bR = \bR^V$, e.g. $\cM(\bR) = K(\bR)$ or
  $\cM(\bR) = M_1(\bR)$, it is easy to see that $\dc$ in $V$ (and in
  fact, using the argument at the beginning of the proof of Theorem
  \ref{thm:ADimpliesDCctblemice}, even $\dc_\bR$ in $V$) already
  implies $\dc$ in $\cM(\bR)$ as every function
  $f \colon \omega \rightarrow \bR$ witnessing $\dc$ in $V$ can be
  coded by a single real and is therefore already contained in
  $\cM(\bR)$. But the same does not hold in general for models $\cM$
  as in Theorem \ref{thm:ADimpliesDC} with $\bR^\cM \subsetneq \bR$
  since if $f \colon \omega \rightarrow \bR^\cM$ is a function
  witnessing $\dc$ in $V$ for reals in $\cM$ for some relation $P$, it
  can be coded by a single real in $V$, but this real need not be in
  $\bR^{\cM}$.
\end{remark*}

For the reader's convenience, we repeat parts of the arguments from
\cite{Ke84} and \cite{St08b} to point out the modifications we need to
make. We start by recalling the notions of quasi-norm and quasi-scale
which go back to \cite{Ke84}.

    \begin{definition}\label{def:quasi-norm}
      Let $B \subseteq \bR$. A relation $\leq$ on $B$ is a
      \emph{quasi-norm} iff
      \begin{enumerate}
      \item $\leq$ is a linear preordering on $B$, i.e. $\leq$ is
        reflexive, transitive, and for all $x,y \in B$, $x \leq y$ or
        $y \leq x$, and
      \item there is no infinite descending chain in $<$, where for
        $x,y \in B$, we write $x < y$ iff $x \leq y$ and
        $\neg(y \leq x)$. \label{it:quasi-norm2}
      \end{enumerate}
    \end{definition}

    \begin{definition}
      Let $B \subseteq \bR$. A \emph{quasi-scale} on $B$ is a sequence
      of quasi-norms $(\leq_i)_{i<\omega}$ on $B$ such that if
      $x_i \in B$ for $i < \omega$ with $x_i \rightarrow x$ as
      $i \rightarrow \infty$ and if for each $i$ there is some
      $n_i \in \omega$ such that $x_k \equiv_i x_{n_i}$ for all
      $k \geq n_i$, \footnote{We write $x \equiv_i y$ iff $x \leq_i y$
        and $y \leq_i x$.} then
      \begin{enumerate}
      \item $x \in B$ (\emph{limit property}), and
      \item for all $i < \omega$, $x \leq_i x_{n_i}$ (\emph{lower
          semi-continuity}).
      \end{enumerate}
    \end{definition}

    If we replace \eqref{it:quasi-norm2} in Definition
    \ref{def:quasi-norm} by ``every nonempty subset of $B$ has a
    $\leq$-least element'', we obtain the usual definitions of norm
    and scale. Hence, under $\dc_\bR$ every quasi-scale is a scale.

    We shall need the following lemma from \cite{Ke84} which is
    motivated by the proof of the Third Periodicity Theorem (see
    \cite[Theorem 6E.1]{Mo09}). Recall that $\ac_{\omega,\bR}$ denotes
    countable choice for reals, i.e., for all relations $P$ on
    $\omega \times \bR$,
    \[ \forall n \in \omega \exists r \in \bR \; P(n,r) \, \Rightarrow
      \, \exists f \colon \omega \rightarrow \bR \, \forall n \in
      \omega \; P(n,f(n)).\]

    \begin{lemma}[$\ac_{\omega, \bR}$] \label{lem:Kechris} Suppose $B$
      is a nonempty set of reals and $(\leq_i)_i$ is a quasi-scale on
      $B$. Let $\Gamma$ be a pointclass containing $B$ such that the
      relation
      \[ R(i,x,y) \Leftrightarrow (x,y \in B \wedge x \leq_i y) \] is
      in $\Gamma$. Moreover, suppose that $\Gamma$ is closed under
      recursive substitutions, $\neg$, $\wedge$, $\vee$, and
      existential and universal quantification over $\bR$. Then $B$
      contains a real $x$ such that
      $\{(n,m) \in \omega \times \omega \st x(n) = m\}$ is in
      $\Gamma$.
    \end{lemma}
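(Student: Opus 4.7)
The plan is to follow the Third Periodicity-style argument indicated by the reference to \cite[Theorem 6E.1]{Mo09}: construct the ``scale-leftmost'' real $x \in B$ via the quasi-scale $(\leq_i)_i$, then characterize $x$ uniquely by a single $\Gamma$-formula so that its graph inherits the $\Gamma$-complexity.

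For the construction, set $B_{-1} = B$, and suppose $x \upharpoonright n$ together with a nonempty $B_{n-1} \subseteq B$ have been defined. Since $\leq_n$ is a quasi-norm, its strict part $<_n$ is well-founded on $B$, so $B_{n-1}$ has a $\leq_n$-least $\equiv_n$-class; let $\phi_n$ denote its $\leq_n$-rank. Put
\[ x(n) = \min\{y(n) \st y \in B_{n-1},\ |y|_n = \phi_n\}, \qquad B_n = \{y \in B_{n-1} \st y(n) = x(n),\ |y|_n = \phi_n\}, \]
so $B_n$ is nonempty by construction. Applying $\ac_{\omega,\bR}$, pick $y_n \in B_n$ for each $n$. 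Since $y_n \upharpoonright (n+1) = x \upharpoonright (n+1)$, we have $y_n \to x$ in the Baire space, and for each fixed $i$, $y_n \in B_i$ whenever $n \geq i$, so $|y_n|_i = \phi_i$ is eventually constant. The limit property of the quasi-scale then yields $x \in B$, and lower semi-continuity gives $|x|_i \leq \phi_i$, which must equal $\phi_i$ by the minimality defining $\phi_i$ over $B_{i-1}$.

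It remains to show that the graph of $x$ lies in $\Gamma$. The key point is that $x$ is the \emph{unique} $z \in \bR$ satisfying the formula $\psi(z)$ stating: ``$z \in B$, and for every $n \in \omega$ and every $y \in B$ with $y(i) = z(i)$ for all $i < n$ and $z \equiv_j y$ for all $j < n$, we have $z \leq_n y$, and moreover $z(n) \leq y(n)$ whenever $z \equiv_n y$,'' where $z \equiv_j y$ abbreviates $R(j,z,y) \wedge R(j,y,z)$. The construction gives $\psi(x)$, and a straightforward induction on $n$ shows that any solution $z$ must have $|z|_n = \phi_n$ and $z(n) = x(n)$, so $\psi$ has a unique solution. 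Since $R, B \in \Gamma$ and the remaining atomic statements are arithmetic, the hypothesized closure of $\Gamma$ under propositional connectives, recursive substitution, and $\forall \bR$ (which absorbs $\forall n \in \omega$ and bounded $\forall i < n$ via recursive coding of integers by reals) places $\psi$ in $\Gamma$; hence
\[ \{(n,m) \st x(n) = m\} = \{(n,m) \st \exists z \in \bR\,(\psi(z) \wedge z(n) = m)\} \in \Gamma. \]

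The main obstacle I expect is precisely this last step: a literal unfolding of the coordinate-by-coordinate construction would produce formulas whose quantifier complexity grows with $n$ and need not stay within $\Gamma$. Compressing the iterated minimization into the single formula $\psi$ above --- with one uniform $\forall n \in \omega$ quantifier and relying only on the already-$\Gamma$ ingredients $R$ and $B$ --- is what keeps the graph of $x$ inside $\Gamma$.
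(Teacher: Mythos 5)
Your construction breaks down at its very first step. You write that since $\leq_n$ is a quasi-norm, ``its strict part $<_n$ is well-founded on $B$, so $B_{n-1}$ has a $\leq_n$-least $\equiv_n$-class; let $\phi_n$ denote its $\leq_n$-rank.'' A quasi-norm is only required to have \emph{no infinite strictly descending chain}; the inference from this to ``every nonempty subset has a $\leq_n$-least element'' (equivalently, to the existence of an ordinal-valued rank $\phi_n$) is exactly the implication that requires $\dc_\bR$, as the paper itself flags when it remarks that under $\dc_\bR$ every quasi-scale is a scale. Under $\ac_{\omega,\bR}$ alone you cannot convert ``every element of $B_{n-1}$ has something strictly $<_n$-below it'' into an infinite descending chain, because each choice depends on the previous one --- that is dependent choice, not countable choice. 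And in the intended application the lemma is invoked inside $\cM|\alpha$, where $\dc_{\bR^\cM}$ is precisely what is being proved, so this step is circular; if quasi-norms could be ranked as you do, every quasi-scale would already be a scale and one could quote the classical basis theorem for scales verbatim, making the lemma and the whole quasi-scale apparatus pointless.

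Everything downstream depends on the $\phi_n$: the nonemptiness of the sets $B_n$, the choice of the $y_n$, the verification that $|x|_i = \phi_i$, and the uniqueness argument for your formula $\psi$. So the gap is not local. Note that the paper does not reprove this lemma but imports it from \cite{Ke84}; the genuinely hard point there is to produce the canonical element \emph{without ever taking a $\leq_n$-minimum}. Following the Third Periodicity template, one must show directly --- using only the limit and lower semi-continuity clauses of the quasi-scale, $\ac_{\omega,\bR}$, and the absence of infinite descending chains --- that a set of ``hereditarily unimprovable'' reals such as the one your formula $\psi$ defines is nonempty and determines a single real. Your closing definability computation (compressing the recursion into the single formula $\psi$ and appealing to the closure properties of $\Gamma$) is fine and standard, but the obstacle you identified as the main one is not the real obstacle: what is missing is the existence proof for a solution of $\psi$ in the absence of well-foundedness.
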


Now we turn to the proof of Theorem \ref{thm:ADimpliesDCctblemice}.

\begin{proof}[Proof of Theorem \ref{thm:ADimpliesDCctblemice}] Work in
  the countable mouse $\cM$ and note that it suffices to prove
  $\dc_{\bR^\cM}$ since there is a definable surjection
  $F: \Ord^\cM \times \bR^\cM \twoheadrightarrow \cM$ (see Proposition
  2.4 in \cite{St08b}). Suppose $\dc_{\bR^\cM}$ fails, i.e. there is a
  relation $P \subseteq \bR^\cM \times \bR^\cM$ such that
  $\forall x \in \bR^\cM \exists y \in \bR^\cM \, P(x,y)$, but there
  is no $f \colon \omega \rightarrow \bR^\cM$ with $P(f(n), f(n+1))$
  for all $n \in \omega$.

  Let $\xi < \Ord^\cM$ be a large enough limit ordinal such that
  $P \in \cM | \xi$ and $\cM | \xi$ is passive. We may assume that
  such a limit ordinal exists because the general case when $\Ord^\cM$
  need not be a limit of limit ordinals can be shown similarly using
  the $S$-hierarchy (see the end of the proof of Theorem 2.1 in
  \cite{St08a}). Let $\alpha$ be the least ordinal below $\xi$ such
  that $\cM | \alpha \prec_1 \cM | \xi$ (in the sense of Definition
  4.4 in \cite{St08b}) and note that $\alpha$ is a limit ordinal. The
  statement
  \begin{align*}
    \exists P & \subseteq \bR^\cM \times \bR^\cM
    (\forall x \in \bR^\cM \exists y \in \bR^\cM \, P(x,y)) \; \wedge
    \\
    & \; \neg \exists f \colon \omega \rightarrow \bR^\cM \, \forall n \,
    P(f(n), f(n+1))
  \end{align*}
  is $\Sigma_1$ in the parameter $\bR^\cM$ as any
  $f \colon \omega \rightarrow \bR^\cM$ can be coded by a
  real. Therefore it follows that there is a counterexample to
  $\dc_{\bR^\cM}$ (in $\cM$) inside $\cM | \alpha$. To finish the
  proof, we use the following lemma.

  \begin{lemma}\label{lem:uniformization} 
    Every relation $P \subseteq \bR^\cM \times \bR^\cM$ in
    $\cM | \alpha$ can be uniformized in $\cM$, i.e. there is a
    function $F \colon \bR^\cM \rightarrow \bR^\cM$ in $\cM$ such that
    for all $x \in \bR^\cM$,
    \[ \exists y P(x,y) \Rightarrow P(x, F(x)). \]
  \end{lemma}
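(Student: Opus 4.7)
The plan is to adapt the strategy of Kechris \cite{Ke84} to the mouse $\cM$: produce quasi-scales on the relevant $\Sigma_1$-definable sections via Steel's scale analysis in \cite{St08b}, then invoke Lemma \ref{lem:Kechris} uniformly in the parameter to extract a definable uniformization. Since $\cM|\alpha \prec_1 \cM|\xi$ and $\alpha$ is minimal with this property, every relation $P \subseteq \bR^\cM \times \bR^\cM$ in $\cM|\alpha$ is $\Sigma_1^{\cM|\alpha}$-definable from $\bR^\cM$ and some real parameter, so it suffices to uniformize relations in the pointclass $\Gamma := \Sigma_1^{\cM|\alpha}(\{\bR^\cM\})$, with an additional real parameter allowed.

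First I would adapt \cite[Theorem 4.1]{St08b} to show that every nonempty $B \in \Gamma$ carries a quasi-scale $(\leq_i)_{i<\omega}$ whose defining relation $R(i,x,y) \Leftrightarrow (x,y \in B \wedge x \leq_i y)$ also lies in $\Gamma$. Steel's construction builds prewellorderings via comparison maps between $\Sigma_1$-Skolem hulls, and it is precisely $\dc$ inside $\cM$ that upgrades these prewellorderings to well-orderings. Without $\dc$ in $\cM$, the same construction still yields linear preorderings whose strict parts admit no infinite descending chain in $\cM$: any such chain would be coded by a single real in $\bR^\cM$ and would, through Steel's comparison argument together with $(\omega_1+1)$-iterability of $\cM$, produce a contradiction with wellfoundedness of background ordinals. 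The $\Sigma_1^{\cM|\alpha}$-definability of $R$ is inherited from Steel's construction unchanged.

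Given $P \in \Gamma$ on $\bR^\cM \times \bR^\cM$, for each $x \in \bR^\cM$ with $\exists y\, P(x,y)$ the section $P_x$ is a nonempty set in $\Gamma$ (with $x$ as an extra real parameter) and, uniformly in $x$, admits a quasi-scale whose relation lies in $\Gamma$. The pointclass $\Gamma$ is closed under recursive substitutions and propositional connectives by inspection, and $\cM \vDash \ad$ together with the $\Sigma_1$-reflection built into the choice of $\alpha$ yields closure under quantification over $\bR^\cM$. Applying Lemma \ref{lem:Kechris} inside $\cM$ produces for each such $x$ a real $F(x) \in P_x$ whose graph is $\Gamma$-definable from $x$. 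The construction is uniform in $x$, so the resulting $F$ (extended arbitrarily on $x$ with $P_x = \emptyset$) is $\Sigma_1$-definable over $\cM|\alpha$ and is therefore a function in $\cM$.

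The main obstacle is the quasi-scale construction: one must trace through Steel's argument and isolate the precise appeals to $\dc$ in $\cM$, replacing wellfoundedness by the weaker clause \eqref{it:quasi-norm2} of Definition \ref{def:quasi-norm}, while arguing that any putative infinite descending chain in the strict part, being coded by a single real of $\cM$, contradicts the iterability hypothesis. A secondary concern is verifying the real-quantifier closure of $\Gamma$ cleanly from $\cM \vDash \ad$ so that Lemma \ref{lem:Kechris} applies uniformly in $x$; this is where the full strength of determinacy in $\cM$ is used.
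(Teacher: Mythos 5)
Your overall architecture matches the paper's: reduce to relations that are $\Sigma_1$-definable over $\cM|\alpha$ from a real and $\bR^\cM$, extract quasi-scales from Steel's closed game representations by weakening prewellorderedness to the no-infinite-descending-chain condition, and feed these into Lemma \ref{lem:Kechris} to pick a witness definably, uniformly in $x$. However, there is a concrete problem with your choice of pointclass. You set $\Gamma := \Sigma_1^{\cM|\alpha}(\{\bR^\cM\})$ and assert it is closed under propositional connectives and both real quantifiers; but Lemma \ref{lem:Kechris} requires closure under $\neg$ and under universal quantification over $\bR$, and $\Sigma_1^{\cM|\alpha}$ has neither closure property in general -- it is precisely the non-self-dual pointclass under analysis. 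The paper instead applies Lemma \ref{lem:Kechris} with $\Gamma$ the pointclass of \emph{all} sets first-order definable over $\cM|\alpha$ from $r$ and $\bR^\cM$: this contains the $\Sigma_1$ sets together with their quasi-scales, is trivially closed under the required operations, and the resulting basis element is first-order definable over $\cM|\alpha$, which is exactly what one needs to define $F(x)$ as the least (in a fixed enumeration of formulae) definable witness and to conclude $F \in \cM$. Your remark that $\ad$ is what yields quantifier closure is a symptom of this confusion; $\ad$ enters only to supply $\ac_{\omega,\bR^\cM}$, the hypothesis of Lemma \ref{lem:Kechris}.

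A second point you gloss over is where the comparison arguments behind Steel's analysis (showing that the positions from which Player I wins are exactly the honest ones) are carried out. The theorem is proved over a $\zf$ background, and the standard termination proof for comparison reflects to a countable elementary substructure, i.e.\ uses $\dc$ in $V$. The paper handles this by running the comparison inside $\HOD_{x,\Sigma,\Sigma'}$, where $x$ codes the two premice and $\Sigma,\Sigma'$ are their iteration strategies; this is a model of choice in which the amenable strategy fragments still witness iterability. Simply invoking ``$(\omega_1+1)$-iterability of $\cM$'' does not by itself license comparison in a choiceless background. Your heuristic for why the norms are quasi-norms (a strictly descending chain would reflect into a descending sequence of ordinals) is the right idea and is what the fake-sup/min/fake-inf construction of Kechris delivers; the detour through coding the chain by a real is unnecessary.
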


  Applying Lemma \ref{lem:uniformization} to the counterexample $P$
  above, we can define a function
  $f \colon \omega \rightarrow \bR^\cM$ by letting
  $f(0) = a \in \bR^\cM$ be arbitrary and $f(n+1) = F(f(n))$. Then
  $P(f(n), f(n+1))$ holds for all $n$, contradicting the choice of
  $P$. So it suffices to prove Lemma \ref{lem:uniformization}.

 \begin{proof}[Proof of Lemma \ref{lem:uniformization}]
   The proof divides into three claims. The first claim uses fine
   structural arguments to obtain definability for the sets of reals
   in $\cM|\alpha$. The key part of the argument is Claim
   \ref{cl:quasi-scale}, where we show the existence of
   quasi-scales. Finally, in Claim \ref{cl:basisresult} we piece Claim
   \ref{cl:quasi-scale} and Lemma \ref{lem:Kechris} together to obtain
   a basis result which will imply the existence of a uniformizing
   function, as desired.
   
  \begin{claim}\label{cl:definability}
    Every set of reals in $\cM | \alpha$ is $\Sigma_1$-definable in
    $\cM | \alpha$ from parameters in $\bR^\cM \cup \{ \bR^\cM \}$.
  \end{claim}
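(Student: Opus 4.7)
The approach exploits the choice of $\alpha$ as the \emph{least} ordinal below $\xi$ with $\cM | \alpha \prec_1 \cM | \xi$, together with a standard condensation argument. Given a set of reals $A \in \cM | \alpha$, I will show that the $\Sigma_1$-Skolem hull of $\bR^\cM \cup \{\bR^\cM\}$ inside $\cM | \alpha$ is already all of $\cM | \alpha$; this immediately yields the desired definability of $A$.

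First I would form $H := \mathrm{Hull}_1^{\cM | \alpha}(\bR^\cM \cup \{\bR^\cM\})$. Since each real is available as a parameter, $\bR^\cM \subseteq H$, and $\bR^\cM \in H$ as a distinguished element. Let $\sigma \colon \bar{\cN} \to \cM | \alpha$ be the inverse of the Mostowski collapse, so that $\sigma$ is $\Sigma_1$-elementary, $\bar{\cN}$ is transitive, and $\sigma \restriction (\bR^\cM \cup \{\bR^\cM\})$ is the identity. Because $\sigma$ fixes $\bR^\cM$ pointwise and as a set, $\bar{\cN}$ is an $\bR^\cM$-premouse, passive since $\cM | \alpha$ is (being an initial segment of the passive level $\cM | \xi$, up to the usual coarse issues at successor points of the hierarchy).

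Next I would invoke condensation for $\bR^\cM$-premice (as developed in the fine-structural setting of \cite{St08b}): the $\Sigma_1$-elementary factor map $\sigma$ with transitive, sound domain, landing inside a passive level of a countably iterable premouse, forces $\bar{\cN}$ to be an initial segment $\cM | \bar{\alpha}$ of $\cM$ for some $\bar{\alpha} \leq \alpha$. Composing $\sigma$ with the $\Sigma_1$-elementary inclusion $\cM | \alpha \hookrightarrow \cM | \xi$ yields a $\Sigma_1$-elementary map $\cM | \bar{\alpha} \to \cM | \xi$; since the identity embeds $\cM | \bar{\alpha}$ into $\cM | \xi$ and $\Sigma_1$-elementarity into $\cM | \xi$ depends only on the structure below $\bar{\alpha}$, it follows that $\cM | \bar{\alpha} \prec_1 \cM | \xi$. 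The minimality of $\alpha$ then forces $\bar{\alpha} = \alpha$, and since $\sigma$ is an initial-segment map fixing $\bR^\cM \cup \{\bR^\cM\}$ and is the identity on ordinals by transitivity at this point, $\sigma = \mathrm{id}$ and $H = \cM | \alpha$. In particular $A \in H$, so $A$ is $\Sigma_1$-definable in $\cM | \alpha$ from parameters in $\bR^\cM \cup \{\bR^\cM\}$.

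The step I expect to be the main obstacle is the verification of the condensation-type conclusion that the collapse $\bar{\cN}$ really is an initial segment $\cM | \bar{\alpha}$ rather than some proper-class-style extension or a premouse sitting strictly inside $\cM | \alpha$ without matching its hierarchy. This is where the hypothesis that $\cM$ is a passive $\bR^\cM$-premouse (so no extender is being added at the top at $\alpha$) and the choice of $\xi$ as a limit level (so that the usual issues at active levels and at successor stages of the $J$- or $S$-hierarchy are avoided) are genuinely used, and it relies on the fine-structural framework of \cite{St08b} rather than on anything intrinsic to the $\Sigma_1$-hull construction.
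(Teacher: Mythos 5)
Your argument is correct and is essentially the paper's own: the paper likewise forms the $\Sigma_1$ Skolem hull of $\bR^\cM \cup \{\bR^\cM\}$ in $\cM|\alpha$ and uses condensation together with the minimality of $\alpha$ (citing the analogue of Lemma 1.11 of Steel's \emph{Scales in $L(\bR)$}) to obtain a partial $\Sigma_1$-definable surjection $h\colon \bR^\cM \twoheadrightarrow \cM|\alpha$, which yields the claim. The only loose point is your closing justification that ``$\sigma$ is the identity on ordinals by transitivity''; the standard way to finish is to note that, since the $\Sigma_1$ Skolem function is uniformly definable and $\sigma$ fixes $\bR^\cM \cup \{\bR^\cM\}$, the collapse $\cM|\bar\alpha$ is equal to its own $\Sigma_1$ hull of $\bR^\cM \cup \{\bR^\cM\}$, so $\bar\alpha = \alpha$ immediately gives $H = \cM|\alpha$.
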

  \begin{proof}
    Standard fine structural arguments show that $\cM | \alpha$ has a
    $\Sigma_1$ Skolem function which is $\Sigma_1$ definable in
    $\cM | \alpha$ (without parameters). As in the proof of Lemma 1.11
    in \cite{St08a} for $L(\bR)$, this together with the fact the we
    chose $\alpha$ minimal with the property that
    $\cM | \alpha \prec_1 \cM | \xi$ yields that there is a partial
    surjection $h \colon \bR^\cM \twoheadrightarrow \cM | \alpha$ such
    that the graph of $h$ is $\Sigma_1$ definable in $\cM | \alpha$
    from parameter $\bR^\cM$. Hence, every set of reals in
    $\cM | \alpha$ is $\Sigma_1$ definable in $\cM | \alpha$ from
    parameters in $\bR^\cM \cup \{ \bR^\cM \}$, as desired.
  \end{proof}

    \begin{claim}\label{cl:quasi-scale}
      Let $B \subseteq \bR^\cM$ be a set of reals which is
      $\Sigma_1$-definable in $\cM|\alpha$ from some real parameter
      $r$ and the parameter $\bR^\cM$. Then there is a quasi-scale
      $(\leq_i)_{i<\omega}$ on $B$ which is also $\Sigma_1$-definable
      in $\cM|\alpha$ from the parameters $r$ and $\bR^\cM$.
    \end{claim}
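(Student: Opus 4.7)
The plan is to adapt the construction of scales on $\Sigma_1^{\cM|\alpha}$ sets carried out in \cite[Theorem 4.1]{St08b}, tracking the use of $\dc$ and showing that each such use can be replaced by $\ac_{\omega,\bR}$ (which is a consequence of $\cM \vDash \ad$) together with the observation that every resulting norm either takes ordinal values or is obtained recursively from earlier quasi-scales, so that no infinite descending chain arises. Concretely, I first fix a bounded formula $\psi$ with
\[ x \in B \Leftrightarrow \exists \beta < \alpha \; \cM|\beta \vDash \psi(x, r, \bR^\cM), \]
and for $x \in B$ set $\beta(x)$ to be the least witnessing $\beta$; by the minimality of $\alpha$ and standard fine structure, the map $x \mapsto \beta(x)$ is $\Sigma_1$-definable over $\cM|\alpha$ from $r, \bR^\cM$.

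The base quasi-norm $\leq_0$ is the pullback $x \leq_0 y \Leftrightarrow \beta(x) \leq \beta(y)$; this is automatically a quasi-norm since its strict part is well-founded. The higher quasi-norms $\leq_i$ for $i \geq 1$ are produced following Steel's closed game representation of $B$: from $\cM|\beta(x)$ one extracts a canonical countable certificate for $\psi(x,r,\bR^\cM)$, consisting of ordinals together with reals lying in simpler $\Sigma_1^{\cM|\alpha'}$ sets for $\alpha' < \alpha$, and $\leq_i$ compares the $i$-th piece of this certificate. For the ordinal components the comparison is well-founded; for the real components it is pulled back from a quasi-scale on the simpler set, supplied by the induction hypothesis (inducting on the fine-structural rank in $\cM|\alpha$). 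Each $\leq_i$ is $\Sigma_1$-definable over $\cM|\alpha$ from $r, \bR^\cM$ because all of its ingredients ($\psi$, the computation of $\beta(x)$, the reading of certificates in $\cM|\beta(x)$, and the recursively constructed quasi-scales) are themselves so definable.

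The limit property and lower semi-continuity are then verified as in \cite{St08b}: if $x_i \to x$ in $B$ with eventual constancy in each $\leq_i$, the eventually constant equivalence classes glue into a coherent certificate producing $x \in B$ with $x \leq_i x_{n_i}$ for all $i$. The main obstacle — and the point at which the argument diverges from Steel's — is the choice principle used here. Steel invokes $\dc$ in $\cM$ both to run the closed game and extract a limit certificate, and to ensure that ordinal-valued norms derived from tree representations remain well-founded. In our setting, the countable sequence $(x_i)_i$ is already given, so we only need to select, for each $i < \omega$, a single real coding the eventual piece of the certificate at level $i$; this is exactly one invocation of $\ac_{\omega,\bR}$, which holds in $\cM$ under $\ad$. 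That no infinite $<_i$-descending chain exists then follows by induction: any such chain would project to an infinite descending chain of ordinals (for the ordinal components) or to an infinite $<_j$-descending chain with $j < i$ in a recursively produced quasi-scale, contradicting the inductive hypothesis.
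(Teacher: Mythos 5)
Your overall strategy -- follow Steel's scale analysis, settle for quasi-norms in place of norms, and pay for the remaining choice with $\ac_{\omega,\bR}$ from $\ad$ -- is the right one, but as written the proposal has two genuine gaps. First, the construction of the quasi-norms themselves is not the one that actually works. The paper (following \cite{St08b}) builds, for each $\beta<\alpha^*$, a closed game representation $x\mapsto G_x^\beta$ of $B^\beta$ in which Player I plays a theory, reals, and ordinals verifying that a wellfounded, iterable model of ``$V=K(\bR)+\varphi(x,r,\bR)$'' exists, and then extracts the quasi-scale from this representation by Kechris's fake sup, min, and fake inf method (\cite[2.6]{Ke84}). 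Your replacement -- comparing the ``$i$-th piece of a canonical countable certificate,'' with ordinal pieces compared by value and real pieces compared via recursively constructed quasi-scales on ``simpler'' sets, by induction on fine-structural rank -- does not correspond to anything in the actual argument, and it is too vague to verify the limit property and lower semi-continuity: the whole content of those properties is the passage from winning quasi-strategies in the games $G_{x_i}^\beta$ to one in $G_x^\beta$, which is exactly what the fake-sup bookkeeping is designed to deliver. There is no induction on rank here; the construction is done directly at the level of the closed game representation.

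Second, and more importantly, you assert that the only choice principle needed beyond $\zf$ is a single invocation of $\ac_{\omega,\bR}$ inside $\cM$. That is not where the real difficulty lies. The $\Sigma_1$-definability of the quasi-norms over $\cM|\alpha$ hinges on identifying the positions from which Player I has a winning quasi-strategy in $G_x^\beta$ with the $(\beta,x)$-honest positions, and the hard direction of that equivalence requires comparing the model $\cN$ arising from a complete run of $G_x^\beta$ (one enumerating all of $\bR^\cM$, which is where countability of $\cM$ in $V$ enters) with $\cM|\beta$. The standard proof of the comparison lemma uses $\dc$ in the background universe. The paper's fix is to carry out this comparison, as well as the application of \cite[Lemma 3.2]{St08b} giving $B=\bigcup_{\beta<\alpha^*}B^\beta$, inside $\HOD_{x,\Sigma,\Sigma'}$ for suitable iteration strategies $\Sigma,\Sigma'$, which is a model of the Axiom of Choice to which the strategies are amenable. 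Your proposal never confronts this, so the definability (and indeed the correctness) of your quasi-norms is not established. You should incorporate the closed game representation, the honest-position subclaims, and the $\HOD$ device explicitly.
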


    \begin{proof}
      Here we use Steel's analysis of scales in mice (see
      \cite{St08b}) under $\dc_\bR$ and observe that it can be used to
      obtain a quasi-scale without any use of $\dc_\bR$. In order to
      show how to do this, we sketch parts of his argument below.

      So let $B \subseteq \bR^\cM$ be a set of reals which is
      $\Sigma_1$-definable over $\cM|\alpha$ with some real parameter
      $r$ and parameter $\bR^\cM$. Hence for some $\Sigma_1$ formula
      $\varphi$,
      \[ x \in B \text{ iff } \cM | \alpha \vDash \varphi(x,r,
        \bR^\cM) \] for all $x \in \bR^\cM$. Recall that $\alpha$ is a
      limit ordinal. If $\cM | \alpha$ satisfies ``$\Theta$ exists'',
      let $\alpha^* = \Theta^{\cM|\alpha}$, otherwise let
      $\alpha^* = \alpha$. Now write for each $\beta < \alpha^*$ and
      $x \in \bR^\cM$,
      \[ x \in B^\beta \text{ iff } \cM | \beta \vDash
        \varphi(x,r,\bR^\cM). \] By \cite[Lemma 3.2]{St08b}, applied
      inside $\HOD_{x,\Sigma}$, where $x$ is a real coding
      $\cM | \alpha$ and $\Sigma$ is an iteration strategy for $\cM$,
      we obtain $B = \bigcup_{\beta < \alpha^*}B^\beta$. Note that
      $\Sigma$ is amenable to $\HOD_{x, \Sigma}$, so the (canonically
      well-ordered) fragment $\Sigma \cap \HOD_{x, \Sigma}$ is
      available within the model $\HOD_{x, \Sigma}$ and witnesses
      iterability there. Moreover, $\HOD_{x,\Sigma}$ is a model of the
      Axiom of Choice. Steel constructs in the proof of Theorem 4.1 in
      \cite{St08b} a closed game representation $x \mapsto G_x^\beta$
      of $B^\beta$ for each $\beta < \alpha^*$. We briefly sketch the
      argument here to show that it can be done in our situation as
      well. First, recall the definition of a closed game
      representation, which was essentially introduced in \cite{Mo08}.

      \begin{definition*}
        Let $x$ be a real and $G_x$ a closed game where Player I plays
        elements of $\BS \times \gamma$ for some ordinal $\gamma$ and
        Player II plays elements of $\BS$, and there is some relation
        $\cQ \subseteq (\omega^{{<}\omega})^{{<}\omega} \times
        \gamma^{{<}\omega}$ such that Player I wins the run
        $((x_0, \gamma_0), x_1, (x_2, \gamma_1), x_3, \dots)$ of $G_x$
        iff
        \[ \forall n \cQ( (x \upharpoonright n, x_0 \upharpoonright n,
          \dots, x_n \upharpoonright n), (\gamma_0, \dots, \gamma_n)
          ).  \] In particular, $G_x$ is continuously associated to
        $x$. We say $x \mapsto G_x$ is a \emph{closed game
          representation of $B$} iff $B$ is the set of all $x$ such
        that Player I has a winning quasi-strategy in $G_x$.
      \end{definition*}

      We now define a closed game representation $x \mapsto G_x^\beta$
      of $B^\beta$ for each $\beta < \alpha^*$. Fix $\beta < \alpha^*$
      and $x$. Let $G_x^\beta$ be the following game:
      
      \[ \begin{array}{c|cccccc} \mathrm{I} & i_0, x_0, \gamma_0 & &
                                                                     i_1,
                                                                     x_2,
           \gamma_1& &\hdots & \\ \hline
    \mathrm{II} & & x_1 & & x_3 & &\hdots 
         \end{array} \]
       
       The rules of the game ask Player I to play
       $i_0, i_1, \dots \in \{0,1\}$ in order to code a theory $T$ in
       the language $\langpm$ of premice with additional constant
       symbols $\{\dot x_i \st i \in \omega\}$ such that every model
       $\cN^*$ of $T$ is well-founded. Furthermore, the players
       alternate playing reals $x_i$, $i \in \omega$, and Player I
       plays additional ordinals $\gamma_i$, $i \in \omega$. The
       theory ensures that for every model $\cN^*$ of $T$, for all
       $i \in \omega$, $(\dot x_i)^{\cN^*} = x_i$ and the definable
       closure of $\{ x_i \st i \in \omega \}$ in
       $\cN^* \upharpoonright \lpm$ is an elementary submodel $\cN$ of
       $\cN^* \upharpoonright \lpm$. By considering its transitive
       collapse we can assume that $\cN$ is transitive. The winning
       conditions for Player I require that he plays the theory $T$
       such that
       \begin{align*}
         \cN \vDash \text{``}V = 
         K(\bR) + \varphi(&x,r,\bR) + \text{ all of my proper initial
           segments} \\ & \text{do not satisfy } \varphi(x,r,\bR)\text{''}.
       \end{align*}
       In addition, he is using the ordinals $\gamma_i$ to not only
       verify well-foundedness of $\cN$ by embedding the ordinals into
       $\omega\beta$, but also to verify iterability of $\cN$ by
       embedding the local $\HOD$'s of $\cN$ into the local $\HOD$'s
       of $\cM|\beta$. This latter embedding corresponds to the
       embedding of the ordinals. This amount of details suffices for
       our sketch of the argument, the formal definition of
       $G_x^\beta$ can be found in \cite[Section 4]{St08b}.

    Let
      \begin{align*}
        B_k^\beta(x,u) \Leftrightarrow & \; u \text{ is a position of
        length } k \text{ from which}\\ & \text{Player I has a winning
        quasi-strategy in } G_x^\beta.
      \end{align*}

      We aim to show that each $B_k^\beta$ is in $\cM | \alpha$ and
      that the map $(\beta, k) \mapsto B_k^\beta$ is $\Sigma_1$
      definable over $\cM|\alpha$ with parameters $r$ and
      $\bR^\cM$. In order to do that, we consider \emph{honest
        positions} in the game $G_x^\beta$, which are positions where
      Player I played the theory $T$ up to this point according to the
      theory of an initial segment $\cM|\xi$ of the true model
      $\cM|\beta$ and the embeddings induced by the ordinals
      $\gamma_i$ according to an elementary embedding between the
      local $\HOD$'s of $\cM|\xi$ and the local $\HOD$'s of the true
      model $\cM|\beta$.

      \begin{definition*}
        We say a position $u = ((i_n,x_{2n},\gamma_n,x_{2n+1}) \st n<k)$
        in the game $G_x^\beta$ is \emph{$(\beta,x)$-honest} iff
        $\cM | \beta \vDash \varphi(x,r,\bR)$ and if
        $\xi \leq \beta$ is least such that
        \[ \cM | \xi \vDash \varphi(x,r,\bR), \] then $x_0 = x$ if
        $k > 0$ and if $\cM^+|\xi$ denotes the canonical expansion of
        $\cM|\xi$ to the language $\lpm(\{x_i \st i < 2k\})$ by
        letting $(\dot x_i)^{\cM^+} = x_i$ for $i<2k$,
        \begin{enumerate}
        \item $\cM^+|\xi$ satisfies all sentences in $T$ determined up
          to the position $u$,
        \item the embedding given by $\cM^+|\xi$ and the ordinals
          $\gamma_i$ for $i<k$ is well-defined and can be extended to
          an order-preserving map
          \[ \pi \colon \omega\xi \rightarrow \omega\beta, \] and
        \item this embedding can be extended to an elementary
          embedding between the relevant local $\HOD$'s.
        \end{enumerate}
      \end{definition*}

      The formal definition of honest positions can be found in
      \cite[Section 4]{St08b}. We let $H_k^\beta(x,u)$ iff $u$ is a
      $(\beta,x)$-honest position of length $k$. The following
      subclaim concerning the definability of honest positions is the
      analogue of \cite[Claim 4.2]{St08b}.

    \begin{subclaim}
      Each $H_k^\beta$ is in $\cM | \alpha$ and the map
      $(\beta, k) \mapsto H_k^\beta$ is $\Sigma_1$-definable in
      $\cM|\alpha$ from parameters $r$ and $\bR^\cM$.
    \end{subclaim}
  
    Moreover, we also get an analogue of \cite[Claim 4.3]{St08b},
    stating that the positions $u$ from which Player I has a winning
    quasi-strategy in $G_x^\beta$ are precisely the $(\beta,x)$-honest
    positions.

    \begin{subclaim}
      For all positions $u$ in $G_x^\beta$ and all natural numbers
      $k$, $B_k^\beta(x,u)$ if, and only if, $H_k^\beta(x,u)$.
     \end{subclaim}
     \begin{proof}
       It is easy to see that $H_k^\beta(x,u)$ implies
       $B_k^\beta(x,u)$ as Player I can win from an $(\beta,x)$-honest
       position $u$ by continuing to play according to the true model
       $\cM|\beta$. For the other implication, let $\sigma$ be a
       winning quasi-strategy for Player I from a position $u$ in
       $G_x^\beta$. Recall that $\bR^{\cM}$ is countable in $V$ and
       consider a complete run
       $((i_n,x_{2n},\gamma_n,x_{2n+1}) \st n<\omega)$ of $G_x^\beta$
       according to $\sigma$ such that
       $\{x_i \st i \in \omega\} = \bR^\cM$. Moreover, consider the
       canonical model $\cN$ associated to this run of $G_x^\beta$ as
       above. We need to show that $\cN$ is an initial segment of
       $\cM|\beta$.

       This part of the proof uses a comparison argument. Recall that
       the standard proof of the comparison lemma (see for example
       Theorem 3.11 in \cite{St10}) uses a reflection argument to a
       small elementary substructure and hence $\dc$. But $\cM|\beta$
       and hence $\cN$ is $(\omega_1+1)$-iterable in $V$, so we can
       perform the comparison in $\HOD_{x,\Sigma,\Sigma^\prime}$,
       where $x$ is a real coding $\cM|\beta$ and $\cN$, and $\Sigma$
       and $\Sigma^\prime$ are iteration strategies for $\cM|\beta$
       and $\cN$ respectively. Similar as before, $\Sigma$ and
       $\Sigma^\prime$ are amenable to $\HOD_{x,\Sigma,\Sigma^\prime}$
       and their (canonically well-ordered) fragments
       $\Sigma \cap \HOD_{x,\Sigma,\Sigma^\prime}$ and
       $\Sigma^\prime \cap \HOD_{x,\Sigma,\Sigma^\prime}$ witness
       iterability in $\HOD_{x,\Sigma,\Sigma^\prime}$, which is a
       model of the Axiom of Choice. So there is no further assumption
       on $\cM$ needed and we obtain that $\cN$ is an initial segment
       of $\cM|\beta$, in fact that $\cN = \cM | \xi$, where $\xi$ is
       least such that $\cM|\xi \vDash \varphi(x,r,\bR)$, as in the
       proof of \cite[Claim 4.3]{St08b}.
     \end{proof}
     
     Now let $(\leq_i^\beta)_i$ be the quasi-scale on $B^\beta$
     constructed from the closed game representation as in 2.6 in
     \cite{Ke84} using the \emph{fake sup}, \emph{min}, and \emph{fake
       inf} method. Then $(\beta, i) \mapsto \leq_i^\beta$ is
     $\Sigma_1$ definable over $\cM|\alpha$ with parameters $r$ and
     $\bR^\cM$ as well, as desired.
    \end{proof}

    Using Claim \ref{cl:quasi-scale} together with Lemma
    \ref{lem:Kechris} we can now show the following claim.
    
  \begin{claim}\label{cl:basisresult}
    Every nonempty set of reals $B$ in $\cM | \alpha$ which is
    $\Sigma_1$-definable in $\cM | \alpha$ from a real parameter $r$
    and the parameter $\bR^\cM$, contains an element $x$ which is
    first-order definable in $\cM|\alpha$ from $r$ and $\bR^\cM$.
  \end{claim}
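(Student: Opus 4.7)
The plan is to combine Claim \ref{cl:quasi-scale} with Lemma \ref{lem:Kechris}, taking for $\Gamma$ the pointclass of subsets of $\bR^\cM$ that are first-order definable over $\cM|\alpha$ from real parameters together with $r$ and $\bR^\cM$. Since Lemma \ref{lem:Kechris} assumes $\ac_{\omega,\bR}$, I would first note that we can run everything inside $\cM$, where $\cM \vDash \ad$ implies $\cM \vDash \ac_{\omega,\bR^\cM}$; this is the only use of choice needed, and it is free from $\cM \vDash \ad$.

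First, apply Claim \ref{cl:quasi-scale} to obtain a quasi-scale $(\leq_i)_{i<\omega}$ on the given $B$ that is itself $\Sigma_1$-definable over $\cM|\alpha$ from $r$ and $\bR^\cM$. In particular, the relation
\[ R(i,x,y) \Leftrightarrow (x,y \in B \wedge x \leq_i y) \]
is $\Sigma_1$-definable over $\cM|\alpha$ from $r$ and $\bR^\cM$, hence lies in $\Gamma$, and $B \in \Gamma$.

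Next, I would check that $\Gamma$ has the closure properties required by Lemma \ref{lem:Kechris}. Closure under recursive substitutions and under the Boolean operations $\neg$, $\wedge$, $\vee$ is immediate from closure of first-order definability over a fixed structure under the corresponding logical operations. For closure under real quantification, one uses that the ``reals'' of the relevant pointclass are the members of $\bR^\cM$, which form a parameter of $\cM|\alpha$; hence $\exists x\in\bR^\cM\,\psi$ and $\forall x\in\bR^\cM\,\psi$ remain first-order over $\cM|\alpha$ from the same parameters whenever $\psi$ is. Thus $\Gamma$ satisfies the hypotheses of Lemma \ref{lem:Kechris}.

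Applying Lemma \ref{lem:Kechris} inside $\cM$ then yields some $x \in B$ whose graph $\{(n,m) \in \omega\times\omega \st x(n)=m\}$ belongs to $\Gamma$, i.e., is first-order definable over $\cM|\alpha$ from $r$ and $\bR^\cM$. Since a real is first-order recoverable from its graph, $x$ itself is first-order definable over $\cM|\alpha$ from $r$ and $\bR^\cM$, as required. The only real ``obstacle'' is bookkeeping: one must be careful that the parameter $\bR^\cM$ is allowed in defining the pointclass $\Gamma$ (so that $B$ and the quasi-scale from Claim \ref{cl:quasi-scale} land in $\Gamma$), and that the quantifier-closure of $\Gamma$ is the quantification over $\bR^\cM$ needed to invoke Lemma \ref{lem:Kechris} inside $\cM$; both are straightforward given the setup.
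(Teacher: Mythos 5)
Your proof is correct and follows essentially the same route as the paper: apply Lemma \ref{lem:Kechris} with the quasi-scale from Claim \ref{cl:quasi-scale} and the pointclass $\Gamma$ of sets first-order definable over $\cM|\alpha$ from $r$ and $\bR^\cM$, using that $\ac_{\omega,\bR^\cM}$ holds in $\cM$ by $\ad$. Just make sure $\Gamma$ is defined with \emph{only} the parameters $r$ and $\bR^\cM$ (not arbitrary real parameters, as your opening sentence might suggest), since otherwise membership of the graph of $x$ in $\Gamma$ would not yield definability of $x$ from $r$ and $\bR^\cM$ alone.
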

  
  \begin{proof}
    We will use Lemma \ref{lem:Kechris} to pick an element out of a
    set of reals $B$ in a definable way using a quasi-scale on
    $B$. Recall that $\ac_{\omega, \bR^\cM}$ holds in $\cM$ as a
    consequence of $\ad$. To obtain Claim \ref{cl:basisresult}, apply
    Lemma \ref{lem:Kechris} inside $\cM|\alpha$ to a nonempty set
    $B \subseteq \bR^\cM$ which is $\Sigma_1$-definable in
    $\cM|\alpha$ from some real parameter $r$ and the parameter
    $\bR^\cM$, the quasi-scale on $B$ obtained in Claim
    \ref{cl:quasi-scale}, and the pointclass $\Gamma$ of all sets
    which are first-order definable in $\cM|\alpha$ from the
    parameters $r$ and $\bR^\cM$.
  \end{proof}
  
  Claim \ref{cl:basisresult} now implies Lemma
  \ref{lem:uniformization}. Suppose $P$ is as in Lemma
  \ref{lem:uniformization}. By Claim \ref{cl:definability} we can in
  addition assume that $P$ is $\Sigma_1$-definable in $\cM | \alpha$
  from a parameter $r \in \bR^\cM$ and the parameter $\bR^\cM$. We can
  define a uniformizing function $F$ as follows. If for a real $x$,
  $\neg \exists y P(x,y)$, let $F(x) = x$. Otherwise, let $F(x)$ be
  the least (with respect to a fixed enumeration of first-order
  formulae) real $z$ which is first-order definable from $x,r$, and
  $\bR^\cM$ in $\cM | \alpha$ such that $P(x,z)$. Then $F \in \cM$ is
  the desired uniformization.
  \end{proof}

  This finishes the proof of Theorem \ref{thm:ADimpliesDCctblemice}.
\end{proof}

\section{Uncountable mice with $\dc_{\bR^\cM}$ in the
  background}\label{sec:noDCinthebackground}
  
In this section we argue that instead of working with countable
premice $\cM$ we can work in a background universe which is a model of
$\dc_{\bR^\cM}$, i.e. we derive Theorem \ref{thm:ADimpliesDC} as a
corollary of Theorem \ref{thm:ADimpliesDCctblemice}.

\begin{proof}[Proof of Theorem \ref{thm:ADimpliesDC}]
  Let $\cM$ be a passive, countably iterable $\bR^\cM$-premouse such
  that $\cM \vDash \ad$. Using $\dc_{\bR^\cM}$ in $V$, we can by the
  standard proof of the L\"owenheim-Skolem Theorem consider a
  countable elementary substructure $\cN$ of $\cM$. Then $\cN$ is an
  $(\omega_1+1)$-iterable $\bR^{\cN}$-premouse and we can apply
  Theorem \ref{thm:ADimpliesDCctblemice} to $\cN$. This yields
  $\cN \vDash \dc$ and hence $\cM \vDash \dc$.
\end{proof}

Finally, note that the statements in Theorem \ref{thm:ADimpliesDC} and
Theorem \ref{thm:ADimpliesDCctblemice} are in fact equivalent by the
following argument. Let $\cM$ be a countable, passive,
$(\omega_1+1)$-iterable $\bR^\cM$-premouse such that $\cM \vDash
\ad$. Let $\Sigma$ be an $(\omega_1+1)$-iteration strategy for $\cM$
and $x_\cM$ be a real coding $\cM$. Now apply Theorem
\ref{thm:ADimpliesDC} inside $\HOD_{x_\cM,\Sigma}$, which is a model
of the Axiom of Choice. 

Using that for any countable set of reals $A$ the Woodin cardinals in
$M_n(A)$ are countable in $V$, we obtain the following corollary.

\begin{corollary}[$\zf$]
  Let $n \leq \omega$ and let $A \in \ctbleset$. Suppose that
  $M_n^\sharp(A)$ exists and is $(\omega_1+1)$-iterable. Moreover,
  suppose that $M_n(A) \cap \bR = A$ and $M_n(A) \vDash \ad$. Then
  $M_n(A) \vDash \dc$.
\end{corollary}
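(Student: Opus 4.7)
The plan is to derive the corollary as a direct application of Theorem \ref{thm:ADimpliesDC} to $\cM = M_n(A)$. By hypothesis $M_n(A) \vDash \ad$, and $M_n(A)$ is manifestly a passive $\bR^{M_n(A)}$-premouse, obtained from $M_n^\sharp(A)$ by removing the top extender and extending via the $L[\vec E, A]$-construction up to $\Ord$. What remains is to verify $\dc_{\bR^{M_n(A)}}$ in $V$ and countable iterability of $M_n(A)$. The first is immediate: by assumption $\bR^{M_n(A)} = A \in \ctbleset$ is countable in $V$, so fixing in $\zf$ any enumeration of $A$ in order type $\omega$ and recursively picking the $P$-successor of least index gives a $\dc_A$-witness for any total binary relation $P$ on $A$.

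For countable iterability of $M_n(A)$, I would use the $(\omega_1+1)$-iteration strategy $\Sigma$ for $M_n^\sharp(A)$ that is given by hypothesis. Given a countable $\bar A$-premouse $\bar\cM$ together with an elementary embedding $\pi \colon \bar\cM \to M_n(A)$, the standard copying construction lifts any iteration tree $\bar\cT$ on $\bar\cM$ to an iteration tree $\cT$ on $M_n(A)$, whose cofinal branches are chosen by a suitable adaptation of $\Sigma$; branches through $\bar\cT$ are then recovered by pullback under the copying maps. The remark preceding the corollary — that the Woodin cardinals of $M_n(A)$ are countable in $V$, itself a consequence of the countability of $M_n^\sharp(A)$ in $V$ — is exactly what ensures that all extenders used along such trees live in a countable set-sized initial segment of $M_n(A)$, so $\pi$ factors through an iterable countable hull and the copying construction is well-defined throughout.

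Once these two hypotheses are in place, Theorem \ref{thm:ADimpliesDC} applied to $M_n(A)$ delivers $M_n(A) \vDash \dc$, which is precisely the conclusion of the corollary. The principal technical step is the countable iterability verification: while routine from the point of view of inner model theory, it requires care to be phrased for Steel's $A$-premice from \cite{St08b} rather than for ordinary premice over the reals, and this is where the countability of the Woodins of $M_n(A)$ in $V$ does its real work. The remaining steps reduce to straightforward bookkeeping.
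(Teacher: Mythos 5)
Your overall derivation is the intended one: apply Theorem \ref{thm:ADimpliesDC} to $\cM = M_n(A)$, observing that $\dc_{\bR^\cM}$ costs nothing in $\zf$ because $\bR^{M_n(A)} = A$ is countable, hence wellorderable, so one can recursively pick $P$-successors of least index in a fixed enumeration; and that countable iterability must be extracted from the assumed $(\omega_1+1)$-iterability of $M_n^\sharp(A)$. The verification of $\dc_A$ in $V$ is correct and is the essential new content of the corollary beyond Theorem \ref{thm:ADimpliesDC}.

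There is, however, one step whose justification is wrong. It is not true that the extenders used in (copies of) iteration trees on a countable hull $\bar\cM$ of $M_n(A)$ ``live in a countable set-sized initial segment of $M_n(A)$,'' and the countability of the Woodin cardinals in $V$ is not what makes the copying construction work. The copied extenders sit at indices which are images under the copy maps of ordinals of $\bar\cM$, and these images can be cofinal in $\Ord^{M_n(A)}$; moreover trees may be based on the part of the model above $\delta_n$. (Also, $M_n(A)$ is obtained by iterating the top extender of $M_n^\sharp(A)$ out of the universe, not by deleting it and resuming an $L[\vec E, A]$-construction.) The correct route to countable iterability is the standard absorption argument: an iteration tree on $M_n(A)$ never uses the top extender of any model, so it can be construed as a tree on $M_n^\sharp(A)$ (the models agree below the relevant images of the critical point of the top extender); hence the $(\omega_1+1)$-iterability of $M_n^\sharp(A)$ yields $(\omega_1+1)$-iterability of $M_n(A)$, and any countable $\bar\cM$ with $\pi \colon \bar\cM \to M_n(A)$ elementary inherits $(\omega_1+1)$-iterability by pulling back this strategy along $\pi$. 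The countability of the Woodin cardinals (equivalently, of $M_n^\sharp(A)$ itself) should instead be read as the reason that all data relevant to the proof of Theorem \ref{thm:ADimpliesDC} --- the reals of $M_n(A)$ and the levels coding its sets of reals --- is countable in $V$, so that the L\"owenheim--Skolem step and the background choice principle are available for free. With the iterability argument repaired as above, your proof is complete.
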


\bibliographystyle{abstract}
\bibliography{References}

\end{document}